\documentclass[11pt]{amsart}

\usepackage[margin=1in]{geometry}
\usepackage{amsmath,amssymb,amsthm,mathtools}
\usepackage{microtype}
\usepackage[hidelinks]{hyperref}

\newtheorem{theorem}{Theorem}[section]
\newtheorem{lemma}[theorem]{Lemma}
\newtheorem{corollary}[theorem]{Corollary}
\theoremstyle{remark}
\newtheorem{remark}[theorem]{Remark}

\newcommand{\F}{\mathbb F}
\newcommand{\one}{\mathbf 1}
\newcommand{\AG}{\operatorname{AG}}

\title[Four special directions in $\AG(2,13)$]
{Four Special Directions in $\AG(2,13)$:\\
The 52-Point Obstruction and the Sharp Minimum}

\author{Qihang Wang}
\address{School of Mathematical Sciences, Peking University, Beijing 100871, China}
\email{2501110049@stu.pku.edu.cn}

\subjclass[2020]{05B25, 51A15, 51E15}
\keywords{special direction, equidistributed direction, finite affine plane,
polynomial method}

\hypersetup{
  pdftitle={Four Special Directions in AG(2,13): The 52-Point Obstruction and the Sharp Minimum},
  pdfauthor={Qihang Wang},
  pdfsubject={Finite affine geometry and special directions},
  pdfkeywords={special direction, equidistributed direction, finite affine plane, polynomial method}
}

\begin{document}

\begin{abstract}
We prove that no $52$-point subset of the affine plane $\F_{13}^{2}$ has
exactly four special directions, where a direction is special when its
thirteen parallel affine lines do not all meet the set in the same number of
points.  A universal incidence identity reduces the four exceptional line-count
functions to a polynomial identity over $\F_{13}$.  Linear independence of the
associated binary forms of degree at least three then forces those functions
to have degree at most two.  Classification of the resulting constant, linear,
and quadratic profiles leaves a quadratic-character congruence with no
solution.  The incidence and polynomial argument supplies the quantifier over
all $52$-point subsets and all four-direction sets; the remaining
classification of quadratic value tables is finite and exact.  Together with
Ghidelli's lower bound and the $65$-point construction of Kiss and Somlai,
this determines the minimum size of a subset of $\F_{13}^{2}$ with exactly
four special directions: it is $65$.

The AI-assisted workflow used OpenAI GPT-5.6 Sol, Anthropic Claude Fable 5,
Grok 4.6, and OpenAI GPT-6 Astra, together with Codex-controlled Danus.
\end{abstract}

\maketitle

\section{Introduction}

Let $S$ be a subset of the affine plane $\F_{13}^{2}$.  Every affine
direction partitions the plane into thirteen parallel lines.  Following the
terminology of Ghidelli \cite{GhidelliRichPoor} and Kiss and Somlai
\cite{KissSomlaiSpecialDirections}, call a direction \emph{special} for $S$
when the thirteen line-intersection counts are not all equal.

The subject belongs to the classical direction problem in finite affine
planes, whose polynomial-method lineage runs from R\'edei's lacunary-polynomial
approach \cite{RedeiLacunary} through later finite-geometric developments such
as Sz\H{o}nyi's direction theorem \cite{SzonyiDirections}; see also Somlai's
recent proof of R\'edei's theorem \cite{SomlaiRedei}.  It also has a
coding-theoretic formulation through multisets and small-weight codewords
supported on concurrent lines; see Adriaensen, Sz\H{o}nyi, and Weiner
\cite{AdriaensenSzonyiWeiner}.  The present paper concerns the ordinary,
unweighted subset problem over the prime field $\F_{13}$.  Related recent
work studies highly structured subsets such as the points below a parabola
and their intersection numbers
\cite{AdriaensenWeinerParabola}.

For a prime $p$ and $1\le n\le p$, Ghidelli proved that a set of cardinality
$np$ in $\F_p^2$ is either
contained in the union of $n$ lines---necessarily a union of $n$ parallel
lines, since only then does the union have cardinality $np$---or has at least
\begin{equation}\label{eq:ghidelli-bound}
  \left\lceil\frac{p+n+2}{n+1}\right\rceil
\end{equation}
special directions \cite[Theorem~1.3]{GhidelliRichPoor}.  Kiss and Somlai
subsequently constructed examples with four special directions meeting this
bound for $p=5,7,11$.  For $p=13$ they constructed a $65$-point example and
asked whether a $52$-point example exists
\cite[Section~6]{KissSomlaiSpecialDirections}.  We prove that no such
$52$-point example exists.  Thus, at $p=13$, the lower-bound value $4p=52$ is not
attainable, whereas the $5p=65$ construction is attainable.

The proof uses the degree bound of Kiss and Somlai
\cite[Proposition~3.1]{KissSomlaiSpecialDirections}: with four special
directions, the residue-valued line-count functions have degree at most two.
We include a self-contained proof of this bound in
Lemma~\ref{lem:degree-two}, classify the resulting profiles in
Section~\ref{sec:profiles}, and derive the contradiction from a
quadratic-character congruence.

\begin{theorem}\label{thm:main}
There is no subset $S\subseteq\F_{13}^{2}$ with $|S|=52$ for which exactly
four affine directions are special.
\end{theorem}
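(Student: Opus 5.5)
We argue by contradiction: suppose $S\subseteq\F_{13}^2$ has $|S|=52$ and exactly four special directions $d_1,\dots,d_4$. After an affine change of coordinates we may assume these are given by slopes in a fixed normalized position (for instance $0,\infty,1,\lambda$), which leaves a single parameter $\lambda\in\F_{13}\setminus\{0,1\}$. For each direction $d_i$ we index its thirteen parallel lines by $t\in\F_{13}$ and write $f_i(t)$ for $|S\cap \ell_{i,t}|$. Since $|S|=52=4\cdot 13$, every non-special direction has all line counts equal to $4$.

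\textbf{Incidence identity.} Through each point $P$ there is one line of each of the $14$ directions. Summing line counts over these lines gives $|S|+13[P\in S]$. For $P\notin S$ this yields
\[
\textstyle\sum_{i=1}^4 f_i(\ell_i(P)) + 10\cdot 4 = 52,
\]
and for $P\in S$ the same sum is $52+13$. Reducing mod $13$, we get $\sum_i g_i(L_i(P))=12\equiv -1$ for every $P\in\F_{13}^2$, where $g_i=f_i \bmod 13$ viewed as functions $\F_{13}\to\F_{13}$ and $L_i$ are the linear forms defining the directions. Over $\F_{13}$ the equation $S$-membership is thus lost mod $13$, but the identity in $\F_{13}$ is exact. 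Expanding each $g_i$ as a polynomial of degree $\le 12$, the degree-$k$ parts must cancel: $\sum_i c_{i,k}L_i(x,y)^k=0$ for each $k\ge1$. Four distinct $k$-th powers of pairwise independent linear forms are linearly independent whenever $k\ge 3$ (a Vandermonde argument on the binary forms). Hence each $g_i$ has degree at most $2$, which is Lemma~\ref{lem:degree-two}. For $k\le2$ the equations are the linear constraints $\sum c_{i,1}L_i=0$ and $\sum c_{i,2}L_i^2=0$ (a 3-dimensional space, so one relation among four squares).

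\textbf{Lifting back to integers.} Each $f_i$ takes values in $\{0,\dots,13\}$ and satisfies $\sum_t f_i(t)=52$. Moreover $f_i\equiv g_i$ with $\deg g_i\le2$. The boundary values $0$ and $13$ coincide mod $13$, which is the only ambiguity. We classify profiles by degree:
\begin{itemize}
\item \emph{Constant $g_i$.} Then $f_i$ is constant except for $0$/$13$ swaps. The sum condition $52$ forces $g_i\equiv 4$ exactly, so the direction is not special. Hence every special $g_i$ is nonconstant.
\item \emph{Linear $g_i$.} Then $g_i$ is a bijection onto $\F_{13}$, so $f_i$ is a permutation of $0..12$ up to $0\leftrightarrow13$, with sum $78$ or $91\neq 52$. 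This is impossible.
\item \emph{Quadratic $g_i$.} Then $g_i(t)=a(t-b)^2+c$ with $a\ne0$. Its values are $c$ once and $c+a r$ twice for each square (or nonsquare) class, depending on $\chi(a)$.
\end{itemize}
So all four $f_i$ are genuinely quadratic mod 13. Their integer sum $52$ becomes a condition on $c$ and on $\chi(a)$: the sum of the representatives of $c+a\cdot\{\text{squares}\}$ doubled plus $c$, equals $52$ up to the $0\leftrightarrow 13$ correction. This is a finite check over $(a,c)$ and determines a short list of admissible pairs, plausibly only one square class.

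\textbf{Contradiction.} Since every $g_i$ is quadratic, the linear-coefficient identity and the identity $\sum_i a_iL_i^2=0$ must hold with all $a_i\ne0$, together with the constant-term identity $\sum_i g_i(\text{const parts})\equiv -1$. The quadratic relation among $L_1^2,\dots,L_4^2$ for slopes $0,\infty,1,\lambda$ is explicit, so the ratios $a_i/a_j$ are determined as functions of $\lambda$. The admissible list from the previous step restricts each $\chi(a_i)$ and each $c_i$. Combining these restrictions gives a quadratic-character congruence, a condition of the form $\chi(\text{expression in }\lambda)$ and $\sum c_i\equiv$ a fixed value, which has no solution over the eleven values of $\lambda$.

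The main obstacle is this last step. The value-table classification of quadratic profiles with integer sum $52$ must be done carefully, including the $0/13$ lifting. Then it must be matched against the character constraints coming from the square relation. I would first tabulate the admissible $(\chi(a),c)$ explicitly, then substitute into the relation and check the congruence, reducing by symmetry in $\lambda$.
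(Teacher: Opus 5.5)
Your outline has the same architecture as the paper: incidence identity, reduction mod $13$, independence of the $k$-th powers for $k\ge3$, profile classification, then a character congruence. However, two steps fail as written.

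\textbf{The constant case.} Your claim that the sum condition forces $g_i\equiv4$ is false. If $g_i\equiv0$, every line count in that direction is $0$ or $13$, and the sum is $52$ exactly when four lines are full. So your inference that all four $g_i$ are quadratic is unjustified. It can be repaired in either of two ways.
\begin{enumerate}
\item Geometrically: four full parallel lines already contain all $52$ points, so $S$ is their union and only that one direction is special.
\item As in the paper: the relation (unique up to scaling) among $L_1^2,\dots,L_4^2$ has all four coefficients nonzero, because any three of these squares are independent. Hence either every $g_i$ is quadratic or every $g_i$ has degree at most one. In the latter case each $g_i$ is the zero constant, and the left side is $0\not\equiv12$.
\end{enumerate}

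\textbf{The final step.} This is the more serious gap: the step is only announced, and its key idea is missing. Write $g_i(t)=a_i(t-b_i)^2+c_i$ as you do. The constant term of $g_i$ is $c_i+a_ib_i^2$, not the vertex value $c_i$. So the constant-term identity says nothing about $\sum_i c_i$ until you prove $\sum_i a_ib_i^2=0$, and this is what the linear identity is for. Polarizing $\sum_i a_iL_i^2=0$ shows that $(L_i(v))_i$ lies in the two-dimensional kernel of $\beta\mapsto\sum_i a_i\beta_iL_i$ for every $v$, and hence fills it. Therefore $\sum_i a_ib_iL_i=0$ forces $b_i=L_i(v_0)$ for a common point $v_0$, and evaluating the identity at $v_0$ gives $\sum_i c_i\equiv12$. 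The paper checks the same cancellation of the square-completion corrections by computing the $u^2$, $uw$, $w^2$ coefficients directly.

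You also left the quadratic table as a guess, and the guess is wrong. The admissible quadratics are exactly those with vertex value $6-2\chi(a)$, that is, $4$ for square $a$ and $8$ for nonsquare $a$. So both square classes occur. Then $\sum_i c_i=24-2\sum_i\chi(a_i)\equiv12$ forces $\sum_i\chi(a_i)\equiv6\pmod{13}$. This is impossible, since the sum is an even integer in $[-4,4]$. The contradiction does not depend on your slope parameter $\lambda$, so no check over its eleven values is needed.
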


Combining this obstruction with the two cited results gives a sharp numerical
answer, not merely a nonexistence statement at one cardinality.

\begin{corollary}\label{cor:minimum}
The minimum cardinality of a subset of $\F_{13}^{2}$ having exactly four
special directions is $65$.
\end{corollary}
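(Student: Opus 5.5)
The statement to prove here is Corollary~\ref{cor:minimum}, so the plan combines Theorem~\ref{thm:main} with the two cited results. There are two halves: an upper bound, that $65$ is attained, and a lower bound, that every subset of $\F_{13}^2$ with exactly four special directions has at least $65$ points.

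\emph{Attainment.} For the upper bound I will invoke the $65$-point construction of Kiss and Somlai \cite[Section~6]{KissSomlaiSpecialDirections}. It is a subset of $\F_{13}^2$ of size $5\cdot 13$ with exactly four special directions.

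\emph{Divisibility.} The lower bound starts by showing that $13$ divides $|S|$ whenever $S$ has a non-special direction. Since $p=13$ has $14$ directions, a set with only four special directions has ten non-special ones. If $d$ is non-special, its thirteen parallel lines all meet $S$ in the same number $k$ of points. These lines partition the plane, so $|S|=13k$. Hence $|S|\in\{0,13,26,39,52,65,\dots\}$, and it suffices to rule out $n=|S|/13\in\{0,1,2,3,4\}$.

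\emph{Small $n$ via Ghidelli.} For $1\le n\le 13$, \cite[Theorem~1.3]{GhidelliRichPoor} says a set of size $13n$ is either a union of $n$ parallel lines or has at least $\lceil (15+n)/(n+1)\rceil$ special directions.
\begin{itemize}
\item A union of $n$ parallel lines with $1\le n\le 12$ has exactly one special direction. Every non-parallel line meets it in exactly $n$ points, while the parallel lines meet it in $13$ or $0$.
\item For $n=1,2,3$ the bound gives $8$, $6$ and $5$ special directions respectively. Each exceeds $4$, so these cases are excluded.
\item For $n=4$ the bound gives exactly $4$, so it does not exclude the case. This is precisely Theorem~\ref{thm:main}, which excludes $|S|=52$.
\item For $n=0$, the empty set has no special directions.
\end{itemize}
Hence $|S|\ge 65$, and with the construction the minimum is $65$.

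\emph{Main obstacle.} The only delicate point is the divisibility step. It needs a non-special direction, which is guaranteed because four special directions leave ten others. It is worth checking this explicitly, since it is what restricts the possible sizes to multiples of $13$. All the real difficulty has been moved into Theorem~\ref{thm:main}.
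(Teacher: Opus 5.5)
Your proposal is correct and follows essentially the same route as the paper's proof. Both use divisibility by $13$ from a nonspecial direction, Ghidelli's bound for $n=1,2,3$ (the parallel-lines exception has only one special direction), Theorem~\ref{thm:main} for $n=4$, and the Kiss--Somlai $65$-point example for attainment.
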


The proof has two logically distinct parts.  The incidence identity in
Section~\ref{sec:incidence} and the polynomial reduction in
Section~\ref{sec:degree} apply to every candidate set.  After that reduction
the line-count functions have degree at most two, and
Section~\ref{sec:profiles} classifies the admissible value tables.  The final
obstruction is a quadratic-character congruence with no solution in
$\F_{13}$.  The argument is therefore not an enumeration of a prefix of the
possible point sets.  Corollary~\ref{cor:minimum} is derived in
Section~\ref{sec:consequences}, where we also state the precise boundary of
what is settled.

\section{The incidence identity}\label{sec:incidence}

For a point $P\in\F_{13}^{2}$ and an affine direction $d$, let $c_d(P)$
denote the number of points of $S$ on the line of direction $d$ through $P$.
Write $\one_S(P)=1$ if $P\in S$, and $\one_S(P)=0$ otherwise.

\begin{lemma}\label{lem:incidence}
Suppose that $|S|=52$ and that the set $D$ of special directions has size
four.  Then
\begin{equation}\label{eq:four-counts}
  \sum_{d\in D}c_d(P)=12+13\one_S(P)
  \qquad(P\in\F_{13}^{2}).
\end{equation}
\end{lemma}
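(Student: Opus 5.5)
The identity follows from double counting the points of $S$ on the $14$ lines through a fixed point $P$. Fix $P\in\F_{13}^{2}$. The $14$ affine directions give $14$ lines through $P$, and these lines partition the plane minus $P$. Every point $Q\neq P$ lies on exactly one of them, and $P$ lies on all fourteen. Summing over all directions therefore gives the standard identity
\begin{equation*}
  \sum_{d}c_d(P)=|S|+13\,\one_S(P)=52+13\,\one_S(P),
\end{equation*}
because each point of $S\setminus\{P\}$ is counted once and $P$, if it lies in $S$, is counted $14$ times, which is $13$ extra times.

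Next we evaluate the contribution of the non-special directions. If $d$ is not special, its $13$ parallel lines all meet $S$ in the same number of points. These lines partition the plane, so that common value is $|S|/13=4$. Hence $c_d(P)=4$ for every $P$ and every $d\notin D$. Since $|D|=4$, there are $14-4=10$ non-special directions, and together they contribute $40$.

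Subtracting gives $\sum_{d\in D}c_d(P)=52-40+13\,\one_S(P)=12+13\,\one_S(P)$, which is \eqref{eq:four-counts}. The argument is routine. The only point needing care is the accounting for $P$ itself in the double count: it lies on all $14$ lines, which produces the term $13\,\one_S(P)$.
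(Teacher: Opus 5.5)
Your proposal is correct and is essentially the paper's proof. You double count over the fourteen lines through $P$ to get $52+13\one_S(P)$, observe that each of the ten nonspecial directions contributes $52/13=4$, and subtract $40$.
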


\begin{proof}
In a nonspecial direction the thirteen line counts are equal and sum to $52$,
so each count is $4$.  There are fourteen directions in $\AG(2,13)$.  When
we sum $c_d(P)$ over all of them, every selected point $Q\ne P$ is counted
once, while $P$ itself is counted fourteen times if $P\in S$.  Hence
\[
  \sum_d c_d(P)=52+13\one_S(P).
\]
The ten nonspecial directions contribute $40$, which gives
\eqref{eq:four-counts}.
\end{proof}

An invertible affine linear change of coordinates sends two distinct
directions in $D$ to the vertical and horizontal directions.  A residual
scaling of the two axes then sends a third to slope $1$.  The fourth has
slope $r\in\F_{13}\setminus\{0,1\}$.  Thus every four-direction set is
represented as
\[
  D=\{\infty,0,1,r\},
  \qquad r\in\{2,3,\ldots,12\}.
\]
Since the argument applies to every such $r$, no choice of projective-orbit
representatives is needed.

Write $V,H,A,B\colon\F_{13}\to\{0,1,\ldots,13\}$ for the integer line-count
functions in these four directions.  With line labels
\[
  \ell_\infty(x,y)=x,
  \qquad \ell_m(x,y)=y-mx,
\]
reduction of \eqref{eq:four-counts} modulo $13$ gives
\begin{equation}\label{eq:polynomial-identity}
  V(x)+H(y)+A(y-x)+B(y-rx)=12
  \qquad((x,y)\in\F_{13}^{2}).
\end{equation}
Here and below the same letters denote the residue-valued functions and their
unique polynomial representatives of degree at most $12$.

\section{Reduction to quadratic line-count functions}\label{sec:degree}

\begin{lemma}\label{lem:rank}
For every $r\in\F_{13}\setminus\{0,1\}$ and every
$k\in\{3,4,\ldots,12\}$, the four homogeneous forms
\[
  x^k,\qquad y^k,\qquad (y-x)^k,\qquad (y-rx)^k
\]
are linearly independent over $\F_{13}$.
\end{lemma}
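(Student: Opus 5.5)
We prove the claim by working in the space of binary forms of degree $k$ over $\F_{13}$. This space has the monomial basis $x^k, x^{k-1}y,\ldots,y^k$, and $3\le k\le 12<13$. Expand each form by the binomial theorem. The form $x^k$ has only the coefficient of $x^k$, and $y^k$ has only the coefficient of $y^k$. The coefficient of $x^{k-j}y^{j}$ in $(y-mx)^k$ is $\binom{k}{j}(-m)^{k-j}$. Since $k<13$, every $\binom{k}{j}$ with $0\le j\le k$ is a nonzero residue mod $13$. This is the only place the restriction $k\le 12$ enters.

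Suppose a relation $\alpha x^k+\beta y^k+\gamma(y-x)^k+\delta(y-rx)^k=0$ holds. Look at the coefficients of the mixed monomials $x^{k-j}y^{j}$ with $1\le j\le k-1$. Neither $x^k$ nor $y^k$ contributes there, so after dividing by the nonzero $\binom{k}{j}$ we get
\[
  \gamma(-1)^{k-j}+\delta(-r)^{k-j}=0\qquad(1\le j\le k-1).
\]
Because $k\ge3$, we may take $j=k-1$ and $j=k-2$. These give $-\gamma-\delta r=0$ and $\gamma+\delta r^{2}=0$. Adding them yields $\delta(r^2-r)=\delta r(r-1)=0$. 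Since $r\ne0,1$, this forces $\delta=0$, and then $\gamma=0$. With $\gamma=\delta=0$, the coefficients of $x^k$ and $y^k$ give $\alpha=0$ and $\beta=0$, so the four forms are linearly independent.

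The only point needing care is the nonvanishing of the binomial coefficients, which holds because $k<p=13$. The role of $k\ge3$ is to supply the two mixed monomials $xy^{k-1}$ and $x^2y^{k-2}$, which exist exactly when $k\ge 3$. The argument fails for $k=2$, where $xy$ is the only mixed monomial; this is consistent with the degree-two bound used later in the paper.
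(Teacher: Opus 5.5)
Your proof is correct and is essentially the paper's argument. You use the coefficients of $y^k$, $xy^{k-1}$, $x^2y^{k-2}$ and $x^k$, which are exactly the coefficients of $1$, $x$, $x^2$ and $x^k$ that the paper extracts after dehomogenizing at $y=1$. Your elimination simply solves the resulting $4\times4$ system, whose determinant $-k\binom{k}{2}r(1-r)$ the paper computes directly, and it rests on the same nonvanishing of $k$, $\binom{k}{2}$, $r$ and $r-1$ modulo $13$.
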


\begin{proof}
A homogeneous polynomial of degree $k$ is determined by its restriction to
the line $y=1$, so it is equivalent to show that the dehomogenizations
\[
  x^k,\qquad 1,\qquad (1-x)^k,\qquad (1-rx)^k
\]
are linearly independent in $\F_{13}[x]$.  Extracting the coefficients of
$1$, $x$, $x^{2}$, and $x^{k}$ produces the $4\times4$ matrix
\[
\begin{pmatrix}
0 & 1 & 1 & 1 \\
0 & 0 & -k & -kr \\
0 & 0 & \binom{k}{2} & \binom{k}{2}r^{2} \\
1 & 0 & (-1)^{k} & (-r)^{k}
\end{pmatrix}.
\]
Its determinant is $-k\binom{k}{2}r(1-r)$.  For
$k\in\{3,\ldots,12\}$ and $r\in\F_{13}\setminus\{0,1\}$ each factor is
nonzero in $\F_{13}$, so the four forms are linearly independent.
\end{proof}

\begin{lemma}\label{lem:degree-two}
Each of $V,H,A,B$ has degree at most two.
\end{lemma}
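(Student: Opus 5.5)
The plan is to turn the functional identity \eqref{eq:polynomial-identity} into an identity of polynomials in $\F_{13}[x,y]$, and then compare homogeneous components degree by degree, using Lemma~\ref{lem:rank} to kill every component of degree at least three.

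First write the unique reduced representatives as
\[
V(t)=\sum_{k=0}^{12}v_kt^k,\quad H(t)=\sum_{k=0}^{12}h_kt^k,\quad A(t)=\sum_{k=0}^{12}a_kt^k,\quad B(t)=\sum_{k=0}^{12}b_kt^k,
\]
with coefficients in $\F_{13}$. Consider the polynomial
\[
F(x,y)=V(x)+H(y)+A(y-x)+B(y-rx)-12\in\F_{13}[x,y].
\]
By \eqref{eq:polynomial-identity}, $F$ vanishes at every point of $\F_{13}^{2}$.

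The key observation is that $F$ has total degree at most $12$. In particular its degree in $x$ and its degree in $y$ are each at most $12<13$. A polynomial over $\F_{13}$ whose degree in each variable is below $13$ and which vanishes on all of $\F_{13}^{2}$ is the zero polynomial. To see this, regard $F$ as a polynomial in $y$ with coefficients in $\F_{13}[x]$. Each coefficient, evaluated at a fixed $x_0$, yields a one-variable polynomial of degree at most $12$ with $13$ roots, so every coefficient vanishes at all $x_0$. Each coefficient is itself of degree at most $12$ in $x$, hence is zero. So $F=0$ identically as a polynomial. This is the step that needs the most care, since the identity \eqref{eq:polynomial-identity} is a priori only an identity of functions. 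It is exactly the total-degree bound $\le 12$, inherited from the reduced one-variable representatives, that makes the passage legitimate.

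Now take the homogeneous component of $F$ of degree $k$ for each $k\ge1$. Since $x$, $y$, $y-x$ and $y-rx$ are linear forms, the degree-$k$ part of $F$ is
\[
v_kx^k+h_ky^k+a_k(y-x)^k+b_k(y-rx)^k,
\]
and it must vanish. For $3\le k\le 12$, Lemma~\ref{lem:rank} states that these four forms are linearly independent over $\F_{13}$, so $v_k=h_k=a_k=b_k=0$. Hence each of $V,H,A,B$ has degree at most two. The components of degree $0$, $1$, $2$ impose only linear relations, which are not needed here and will be exploited in the profile classification.
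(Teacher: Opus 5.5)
Your proposal is correct and is essentially the paper's proof: both turn the functional identity \eqref{eq:polynomial-identity} into an identity in $\F_{13}[x,y]$, using that each variable has degree below $13$, and then apply Lemma~\ref{lem:rank} to the homogeneous components of degree $k\ge3$; the paper argues by contradiction from the largest such $k$, while you kill each $k$ directly, which is equivalent. One small wording fix: in the vanishing step it is $F(x_0,y)$, not each coefficient, that is a polynomial in $y$ of degree at most $12$ with $13$ roots, and this forces every coefficient to vanish at $x_0$.
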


\begin{proof}
Interpret $V,H,A,B$ as their unique polynomial representatives of degree at
most $12$.  The left-hand side of \eqref{eq:polynomial-identity}, minus
$12$, is then a bivariate polynomial of degree at most $12$ in each
variable, agreeing as a function with a quantity that vanishes on the full
$13\times13$ grid.  For each fixed $y_{0}\in\F_{13}$ the resulting univariate
polynomial in $x$ has $13$ roots and degree less than $13$, hence is the
zero polynomial; the coefficient polynomials in $y$ likewise vanish, so the
bivariate polynomial is identically zero.

If one of the four univariate polynomials had degree at least three, let $k$
be the highest such degree.  The homogeneous degree-$k$ part of the identity
would be a nontrivial linear relation among the four forms in
Lemma~\ref{lem:rank}.  This contradiction proves the assertion.
\end{proof}

\begin{remark}[Attribution]\label{rem:attribution}
Kiss and Somlai proved that if a
subset of $\F_p^{2}$ of cardinality divisible by $p$ has $k\ge2$ special
directions, then the mod-$p$ line-count function of every direction is a
polynomial of degree at most $k-2$
\cite[Proposition~3.1]{KissSomlaiSpecialDirections}; the statement above is
the case $k=4$, $p=13$, restricted to the four special directions.  The
short proof via Lemma~\ref{lem:rank} is included to keep the paper
self-contained.
\end{remark}

\section{Exact classification of the reduced profiles}\label{sec:profiles}

Let $\chi$ denote the quadratic character of $\F_{13}^{\times}$.

\begin{lemma}\label{lem:profiles}
Let $q\in\F_{13}[t]$ have degree at most two.  Suppose that its thirteen
values lift to integers in $\{0,1,\ldots,13\}$ whose sum is $52$.
Then exactly one of the following occurs.
\begin{enumerate}
\item The residue polynomial is the constant $4$.
\item The residue polynomial is the constant $0$, with exactly four of the
      thirteen values lifted to $13$.
\item The polynomial is genuinely quadratic,
      \[
        q(t)=at^2+bt+c,
        \qquad a\ne0,
      \]
      no zero value is lifted to $13$, and
      \begin{equation}\label{eq:vertex-condition}
        c-\frac{b^2}{4a}=6-2\chi(a).
      \end{equation}
\end{enumerate}
No nonconstant linear polynomial is admissible.
\end{lemma}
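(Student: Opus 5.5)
The plan is to pass from residues to integers explicitly. Let $\rho\colon\F_{13}\to\{0,1,\ldots,12\}$ be the least nonnegative residue. The only ambiguity in lifting a value of $q$ to $\{0,\ldots,13\}$ is that a zero value may lift to $0$ or to $13$. So if $m$ denotes the number of zeros of $q$ lifted to $13$, the hypothesis becomes the exact integer equation
\[
  \sum_{t\in\F_{13}}\rho(q(t)) + 13m = 52,
  \qquad 0\le m\le \#\{t: q(t)=0\}.
\]
Reducing modulo $13$ gives nothing, since $\sum_t q(t)=0$ in $\F_{13}$ for every $q$ of degree at most two. The whole content is therefore in the integer sizes. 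I will split according to the degree of $q$.

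\textbf{Constant and linear cases.} If $q=c$ is constant, the equation reads $13\rho(c)+13m=52$, so $\rho(c)+m=4$. If $c\neq0$, then $m=0$ and $c=4$, which is case~(1). If $c=0$, then $m=4$, which is case~(2). If $q$ is linear and nonconstant, $t\mapsto q(t)$ is a bijection of $\F_{13}$. The residues then sum to $0+1+\cdots+12=78$, and $78+13m=52$ is impossible. So no nonconstant linear $q$ is admissible.

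\textbf{Quadratic case.} Complete the square: $q(t)=a(t-h)^2+v$ with $h=-b/(2a)$ and $v=c-b^2/(4a)$. As $t$ runs over $\F_{13}$, $q$ takes the value $v$ once and each value $v+u$ twice, where $u$ ranges over the coset $Q_a=a\cdot(\F_{13}^{\times})^2$. This coset is the set of nonzero squares $\{1,3,4,9,10,12\}$ when $\chi(a)=1$, and the set of nonsquares $\{2,5,6,7,8,11\}$ when $\chi(a)=-1$. The equation becomes
\[
  F_\epsilon(v):=\rho(v)+2\sum_{u\in Q_a}\rho(v+u)=52-13m,
  \qquad \epsilon=\chi(a).
\]
The number of zeros of $q$ is $1$ if $v=0$, is $2$ if $-v\in Q_a$, and is $0$ otherwise, so $m\in\{0,1,2\}$.

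The remaining step, which I expect to be the main (purely computational) obstacle, is to tabulate $F_{+1}(v)$ and $F_{-1}(v)$ for all thirteen $v$. The identity
\[
  \rho(v)+\sum_{u\in Q}\rho(v+u)+\sum_{u\in N}\rho(v+u)=78
\]
relates the two tables and halves the work. From the tables I then read off which pairs $(v,m)$ are admissible. The claim to verify is that $F_\epsilon(v)=52$ exactly when $v=6-2\epsilon$: that is, $v=4$ for squares and $v=8$ for nonsquares. For example, with $\epsilon=1$ and $v=4$, the shifted values are $\{5,7,8,0,1,3\}$, which sum to $24$, giving $4+48=52$. The other half of the claim is that the values $52-13m=39$ and $26$ are never attained at a $v$ whose zero count permits that $m$. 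This gives $m=0$ ("no zero value lifted to $13$") together with the vertex condition \eqref{eq:vertex-condition}, which is case~(3).

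Finally, the three cases are mutually exclusive because they are distinguished by the degree of the residue polynomial and by its constant value. This yields "exactly one".
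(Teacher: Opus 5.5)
Your proposal is correct and follows essentially the same route as the paper: the same constant/linear/quadratic split, the same completed-square description of the value multiset, and the same pair of thirteen-entry tables $F_{\pm1}(v)$. The paper prints these tables, and they confirm your claims: every entry is at least $52$, with equality only at $(\epsilon,v)=(1,4)$ and $(-1,8)$. Hence the targets $39$ and $26$ for $m\ge1$ are excluded immediately, since lifting a zero to $13$ can only increase the sum.
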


\begin{proof}
An integer line count is between $0$ and $13$.  Once its residue modulo $13$
is nonzero, its lift is unique; a zero residue may lift to either $0$ or $13$.
For a constant residue $c\neq0$ the sum of lifts is $13c$, so $c=4$.  For the
zero polynomial the sum is $13$ times the number of $13$-lifts, so exactly
four lines are full.

A nonconstant linear polynomial permutes $\F_{13}$.  The sum of its least
nonnegative residues is $0+1+\cdots+12=78$; lifting its unique zero changes
this only to $91$.  Neither value is $52$.

Now suppose $q(t)=at^{2}+bt+c$ with $a\neq0$.  Completing the square gives
\[
  q(t)=a\Bigl(t+\frac{b}{2a}\Bigr)^{2}+\delta,
  \qquad
  \delta=c-\frac{b^{2}}{4a}.
\]
As $t$ runs through $\F_{13}$, so does the translated argument, and the value
multiset is $\{\delta+au^{2}:u\in\F_{13}\}$.  Put
\[
 Q_+=\{1,3,4,9,10,12\},\qquad Q_-=\{2,5,6,7,8,11\}.
\]
If $\varepsilon=\chi(a)$, the vertex value $\delta$ occurs once, and
each value $\delta+v$ with $v\in Q_\varepsilon$ occurs twice.
For $z\in\F_{13}$ let $[z]$ denote its representative in
$\{0,\ldots,12\}$.  The sum of the least nonnegative residues is therefore
\[
 T_\varepsilon(\delta)=[\delta]+2\sum_{v\in Q_\varepsilon}[\delta+v].
\]
The complete table is
\[
\setlength{\arraycolsep}{3.5pt}
\begin{array}{c|rrrrrrrrrrrrr}
\delta&0&1&2&3&4&5&6&7&8&9&10&11&12\\ \hline
T_+(\delta)&78&65&78&65&52&65&78&91&104&91&78&91&78\\
T_-(\delta)&78&91&78&91&104&91&78&65&52&65&78&65&78
\end{array}
\]
Lifting any zero to $13$ only increases the sum.  Every entry is at least
$52$, with equality precisely at $(\varepsilon,\delta)=(1,4)$ or
$(-1,8)$.  Thus these two cases, without any zero lift, are exactly the
possibilities in \eqref{eq:vertex-condition}.  For each of the $12$ choices
of $a\ne0$ and $13$ choices of $b$, that condition determines $c$ uniquely;
hence exactly $156$ of the $2028$ quadratic coefficient triples are admissible.
\end{proof}

\section{The character obstruction}

\begin{proof}[Proof of Theorem~\ref{thm:main}]
Assume that such a set $S$ exists, and use the normalization above.  Compare
the homogeneous quadratic terms of \eqref{eq:polynomial-identity}.  If
$\lambda$ is the coefficient of $t^2$ in $B$, then the quadratic coefficients
of $(V,H,A,B)$ are
\begin{equation}\label{eq:quadratic-coefficients}
  \lambda\bigl(-r(r-1),\ r-1,\ -r,\ 1\bigr).
\end{equation}

If $\lambda=0$, all four count polynomials have degree at most one.  Each of
the four directions is special, so Lemma~\ref{lem:profiles} rules out the
constant-$4$ and nonconstant-linear cases.  Every polynomial must therefore
be the constant residue zero, making the left-hand side of
\eqref{eq:polynomial-identity} congruent to zero rather than $12$.  This is a
contradiction.

Suppose now that $\lambda\ne0$.  If $u$ and $w$ are the linear coefficients
of $A$ and $B$, comparison of the degree-one terms gives the linear
coefficients of $(V,H,A,B)$ as
\begin{equation}\label{eq:linear-coefficients}
  (u+rw,\ -u-w,\ u,\ w).
\end{equation}
All four leading coefficients in \eqref{eq:quadratic-coefficients} are
nonzero, so Lemma~\ref{lem:profiles} applies in its quadratic case.  For a
quadratic $at^2+bt+c$, solve \eqref{eq:vertex-condition} for the constant
term:
\[
  c=6-2\chi(a)+\frac{b^2}{4a}.
\]

The sum of the four square-completion corrections vanishes.  Indeed, since
$4\lambda\neq0$ in $\F_{13}$, it is equivalent to multiply through by
$4\lambda$; the coefficients of $u^{2}$, $uw$, and $w^{2}$ are then
respectively
\begin{align*}
 &\frac1{-r(r-1)}+\frac1{r-1}+\frac1{-r}=0,\\
 &\frac{2r}{-r(r-1)}+\frac2{r-1}=0,\\
 &\frac{r^2}{-r(r-1)}+\frac1{r-1}+1=0.
\end{align*}
Consequently the constant term in \eqref{eq:polynomial-identity} would require
\begin{equation}\label{eq:character-equation}
  24-2\chi(\lambda)\Sigma\equiv12\pmod{13},
\end{equation}
where
\[
  \Sigma=
  \chi\bigl(-r(r-1)\bigr)+\chi(r-1)+\chi(-r)+1.
\]
The integer $\Sigma$ is a sum of four values $\pm1$, hence lies in
$\{-4,-2,0,2,4\}$.  Reducing \eqref{eq:character-equation} gives
$\chi(\lambda)\Sigma\equiv6\pmod{13}$, so the congruence asks for
$\Sigma\equiv6\pmod{13}$ when $\chi(\lambda)=1$, or
$\Sigma\equiv-6\equiv7\pmod{13}$ when $\chi(\lambda)=-1$.  Neither residue
occurs among the five displayed integers.  This final contradiction proves
the theorem.
\end{proof}

\begin{remark}[Scope of the finite classification]
The only case-by-case arithmetic is the classification of quadratic value
tables in Lemma~\ref{lem:profiles}, equivalently the $2028$ coefficient
triples $(a,b,c)$.  The rank identity of Lemma~\ref{lem:rank} and the
character congruence \eqref{eq:character-equation} are closed-form and
independent of a choice of $r$.  Neither step enumerates $52$-point subsets:
the incidence identity and polynomial reduction are what make the quadratic
classification exhaustive for all such subsets and all four-direction sets.
\end{remark}

\section{The sharp minimum and the remaining scope}\label{sec:consequences}

\begin{proof}[Proof of Corollary~\ref{cor:minimum}]
If a subset of $\F_{13}^{2}$ has exactly four special directions, it has ten
nonspecial directions.  Equidistribution in any one of them implies that its
cardinality is divisible by $13$.  Write the cardinality as $13n$; the empty
set has no special direction, so $n\ge1$.

For $n=1,2,3$, the lower bound \eqref{eq:ghidelli-bound} gives respectively
$8$, $6$, and $5$ special directions unless the set is a union of $n$
parallel lines.  Such a union has exactly one special direction, so it cannot
be an exception here.  Thus no example has $13$, $26$, or $39$ points.
Theorem~\ref{thm:main} excludes the next possible cardinality, $52$.
Finally, Kiss and Somlai give a $65$-point subset with exactly four special
directions \cite[Figure~6]{KissSomlaiSpecialDirections}.  Hence $65$ is both
attained and minimal.
\end{proof}

\begin{remark}[What is, and is not, settled]
The obstruction is precisely the universal nonexistence theorem at
$p=13$, $|S|=52$, and four special directions.  In combination with earlier
theorems it closes the corresponding minimum-cardinality problem at $p=13$.
It does not classify the $65$-point minimizers, settle the analogous question
for other primes, or determine the full spectrum of attainable numbers of
special directions.
\end{remark}

\section*{Acknowledgments}

The author acknowledges Gewu Intelligence Lab for providing the collaborative
research environment in which this project was developed.

\section*{Disclosure of automated assistance}

Codex-controlled Danus \cite{Danus}, OpenAI GPT-5.6 Sol,
Anthropic Claude Fable 5, and Grok 4.6 were used together for mathematical idea formulation,
conjecture and proof-strategy generation, intermediate derivations, example
construction, argument organization, and LaTeX drafting and revision.
OpenAI GPT-6 Astra was subsequently used to check the mathematical
derivations, verify the references, and revise the manuscript.


\begin{thebibliography}{9}

\bibitem{AdriaensenSzonyiWeiner}
S.~Adriaensen, T.~Sz\H{o}nyi, and Z.~Weiner,
\emph{Multisets with few special directions and small weight codewords in
Desarguesian planes},
Des.\ Codes Cryptogr. \textbf{94} (2026), no.~2, article 35,
\href{https://doi.org/10.1007/s10623-025-01777-8}
{doi:10.1007/s10623-025-01777-8}.

\bibitem{AdriaensenWeinerParabola}
S.~Adriaensen and Z.~Weiner,
\emph{Points below a parabola in affine planes of prime order},
Bull. Belg. Math. Soc. Simon Stevin \textbf{32} (2025), no.~3, 332--342,
\href{https://doi.org/10.36045/j.bbms.241203}
{doi:10.36045/j.bbms.241203}.

\bibitem{Danus}
J.~Liu, G.~Gao, Z.~Sun, B.~Wu, S.~Liu, J.~Jiang, H.~Ju, L.~Chen,
R.~Cheng, X.~Zhang, and B.~Dong,
\emph{Danus: Orchestrating Mathematical Reasoning Agents with Fact-Graph
Memory}, arXiv:2607.06447 [cs.AI] (2026),
\href{https://arxiv.org/abs/2607.06447}{arXiv:2607.06447}.

\bibitem{GhidelliRichPoor}
L.~Ghidelli,
\emph{On rich and poor directions determined by a subset of a finite plane},
Discrete Math. \textbf{343} (2020), no.~5, article 111811,
\href{https://doi.org/10.1016/j.disc.2020.111811}
{doi:10.1016/j.disc.2020.111811}.

\bibitem{KissSomlaiSpecialDirections}
G.~Kiss and G.~Somlai,
\emph{Special directions on the finite affine plane},
Des.\ Codes Cryptogr. \textbf{92} (2024), no.~9, 2587--2597,
\href{https://doi.org/10.1007/s10623-024-01404-y}
{doi:10.1007/s10623-024-01404-y}.

\bibitem{RedeiLacunary}
L.~R\'edei,
\emph{Lacunary polynomials over finite fields},
North-Holland Publishing Co., Amsterdam, 1973.
Translated from the German by I.~F\"oldes.

\bibitem{SomlaiRedei}
G.~Somlai,
\emph{A new proof of R\'edei's theorem on the number of directions},
Arch. Math. (Basel) \textbf{122} (2024), no.~6, 575--580,
\href{https://doi.org/10.1007/s00013-024-01979-x}
{doi:10.1007/s00013-024-01979-x}.

\bibitem{SzonyiDirections}
T.~Sz\H{o}nyi,
\emph{On the number of directions determined by a set of points in an affine
Galois plane},
J. Combin. Theory Ser. A \textbf{74} (1996), no.~1, 141--146,
\href{https://doi.org/10.1006/jcta.1996.0042}
{doi:10.1006/jcta.1996.0042}.

\end{thebibliography}
\end{document}